\newtheorem{thm}{Theorem}[section]
\newtheorem{dfn}{Definition}[section]
\newtheorem{rmk}{Remark}[section]
\DeclareMathOperator{\diam}{diam}
\DeclareMathOperator{\diag}{diag}
\DeclareMathOperator{\Tr}{Tr}
\title{A note on the distance and distance signless Laplacian spectral radius of complements of trees}
\author{Iswar Mahato \thanks{Department of Mathematics, Indian Institute of Technology Kharagpur, Kharagpur 721302, India. Email: iswarmahato02@gmail.com, iswarmahato02@iitkgp.ac.in}\  \and M. Rajesh Kannan\thanks{Department of Mathematics, Indian Institute of Technology Hyderabad, Hyderabad 502285, India. Email: rajeshkannan1.m@gmail.com, rajeshkannan@math.iith.ac.in }}
\date{}
\begin{document}
\maketitle

\begin{abstract}
 In this article, we show that the generalized tree shift operation increases the distance spectral radius, distance signless Laplacian spectral radius, and the $D_\alpha$-spectral radius of complements of trees. As a consequence of this result, we correct an ambiguity in the proofs of some of the known results. 

\end{abstract}

{\bf AMS Subject Classification (2010):} 05C50, 05C35.

\textbf{Keywords.} Complement of tree, Distance spectral radius, Distance signless Laplacian spectral radius, $D_{\alpha}$-spectral radius.

\section{Introduction}\label{sec1}
Let $G=(V(G), E(G))$ be a finite, simple, and connected graph with the vertex set $V(G)=\{v_1,v_2,\hdots,v_n\}$ and the edge set $E(G)=\{e_1,e_2,\hdots,e_m\}$. The cardinality of the vertex set $V(G)$ is the \textit{order} of the graph $G$. Two vertices $v_i$ and $v_j$ of $ G$ are \textit{adjacent} if an edge exists between them in $G$. Let $N(v)$ denote the collection of vertices that are adjacent to the vertex $v$ in $G$, and $N(v)$ is called the \textit{neighbour} of $v $ in $G$. The \textit{complement} $\overline{G}$ of $G$ is the graph with $V(\overline{G}) = V(G)$ and two distinct vertices in $\overline{G}$ are adjacent if and only if they are non-adjacent in $G$. The \textit{distance} between the vertices $v_i$ and $v_j$ in $G$, denoted by $d_G(v_i,v_j)$, is the length of a shortest path between them in $G$, and define $d_G(v_i,v_i) =0$  for all $v_i \in V(G)$. The \textit{diameter} of $G$, denoted by  $\diam(G)$, is the greatest distance between any pair of vertices in $G$. The \textit{distance matrix} $D(G)$ of $G$ is the $n \times n$ matrix with its rows and columns are indexed by the vertex set of $G$, and the $(v_i,v_j)$-th entry is equal to $d_G(v_i,v_j)$. The \textit{transmission} of a vertex $v_i$, denoted by $\Tr(v_i)$, is the sum of the distances from $v_i$ to all other vertices of $G$. Let $\Tr(G)=\diag \big(\Tr(v_1),\Tr(v_2),\hdots,\Tr(v_n)\big)$ be the diagonal matrix of vertex transmissions of the vertices of $G$. The \textit{distance signless Laplacian matrix} of $G$ is defined as $D^Q(G)=\Tr(G)+D(G)$. The largest eigenvalue of $D(G)$ and $D^Q(G)$ are called \textit{the distance spectral radius} and the \textit{distance signless Laplacian spectral radius} of $G$, respectively. We denote the distance spectral radius and the distance signless Laplacian spectral radius of $G$ by $\lambda_1(G)$ and $\mu_1(G)$, respectively. The $D_{\alpha}$-matrix of $G$ is defined as 
\begin{align*}
	D_\alpha (G) = \alpha Tr(G) + (1-\alpha) D(G) \quad \text{for any real $\alpha \in [0,1]$}.     
\end{align*}
The largest eigenvalue of $D_\alpha (G)$ is called \textit{the $D_\alpha$-spectral radius} of $G$ and is denoted by $\rho_\alpha(G)$. Let $P_n$ and $K_{1,n-1}$ denote the path and the star on $n$ vertices, respectively.

Let $u, v $ be two vertices in a graph $G$. Let  $G^\prime$ be the graph obtained from $G$ by removing the edges between $v$ and $N(v) \setminus (N(u)\cup \{u\})$ and adding the edges between $u$ and $N(v) \setminus (N(u) \cup \{u\})$. The above operation is known as the Kelmans transformation.  
\begin{rmk}\label{remark}{\rm
For a tree $T$, if $T^\prime$ is obtained from $T$ by applying the Kelmans transformation exactly once, then the number of pendant vertices in $T^{\prime}$ equals the number of pendant vertices in $T$ plus one. The diameter of $T^\prime$ is either equal to $\diam(T)$ or $\diam(T)-1$.  }
\end{rmk}
In \cite{Lin-Dist-comple}, Lin and Drury proved that the Kelmans transformation, under suitable assumptions, reduces the distance spectral radius of complements of trees.

\begin{thm}[{\cite[Theorem $2$]{Lin-Dist-comple}}]\label{transfom 1}
	Let $T$ be a tree on $n$ vertices with $\diam(T)\geq 4$. Let $X=(x_1,x_2,\hdots,x_n)^t$ be the unit Perron vector of $D(T^c)$. Let $uv\in E(T)$ with $x_u\geq x_v$, and 
	$$T^{\prime}=T-\{vw: w\in N_T(v)\setminus \{u\}\}+\{uw: w\in N_T(v)\setminus \{u\}\}.$$
	Then $\lambda_1(\overline{T^{\prime}})\geq \lambda_1(\overline{T})$ with equality if and only if $N_T(v)=\{u\}$.
\end{thm}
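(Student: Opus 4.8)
The plan is a Rayleigh-quotient argument comparing the two distance matrices on their common vertex set. Let $X=(x_1,\dots,x_n)^{t}>0$ be the unit Perron vector of $D(\overline{T})$ as in the statement; then $\lambda_1(\overline{T})=X^{t}D(\overline{T})X$, while $\lambda_1(\overline{T^{\prime}})\ge X^{t}D(\overline{T^{\prime}})X$ since $X$ is a unit vector, so it suffices to show $X^{t}\big(D(\overline{T^{\prime}})-D(\overline{T})\big)X\ge 0$ and to determine when it can vanish. First I would record the structure. Put $W=N_T(v)\setminus\{u\}$; since $T$ is a tree, $u$ and $v$ have no common neighbour, so $W=N_T(v)\setminus(N_T(u)\cup\{u\})$, $N_T(v)=\{u\}\cup W$, and $T^{\prime}$ is obtained from $T$ by deleting every edge $vw$ with $w\in W$ and inserting every edge $uw$ with $w\in W$. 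If $W=\emptyset$ then $T^{\prime}=T$ and the statement is trivial (with equality), so assume $W\neq\emptyset$; the goal is then strict inequality. Since $\diam(T)\ge 4$ it is standard that $\diam(\overline{T})\le 2$, so every entry of $D(\overline{T})$ is $0$, $1$ or $2$; and since $\diam(T^{\prime})\ge\diam(T)-1\ge 3$ by Remark~\ref{remark}, $\overline{T^{\prime}}$ is connected, so $D(\overline{T^{\prime}})$ is irreducible.

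The crux is to catalogue the distance changes. From $\overline{T}$ to $\overline{T^{\prime}}$ the only adjacency changes are: each pair $uw$ $(w\in W)$ passes from an edge to a non-edge, and each pair $vw$ $(w\in W)$ passes from a non-edge to an edge. Using $\diam(\overline{T})\le 2$, I would check case-by-case on an arbitrary pair $\{x,y\}$ that
\[
d_{\overline{T^{\prime}}}(x,y)\ \ge\ d_{\overline{T}}(x,y)\qquad\text{for every pair except the }\{v,w\}\ (w\in W),
\]
while $d_{\overline{T^{\prime}}}(v,w)=1<2=d_{\overline{T}}(v,w)$ for $w\in W$ (this is the only place the diameter hypothesis enters, and it concerns $\overline{T}$, not $\overline{T^{\prime}}$), and on the other hand $d_{\overline{T^{\prime}}}(u,w)\ge 2>1=d_{\overline{T}}(u,w)$ for each $w\in W$ because $uw\in E(T^{\prime})$. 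Consequently, using $X>0$, $d_{\overline{T^{\prime}}}(u,w)-1\ge 1$, and $x_u\ge x_v$,
\[
X^{t}\big(D(\overline{T^{\prime}})-D(\overline{T})\big)X\ \ge\ 2\sum_{w\in W}\big(d_{\overline{T^{\prime}}}(u,w)-1\big)x_ux_w-2\sum_{w\in W}x_vx_w\ \ge\ 2\sum_{w\in W}(x_u-x_v)x_w\ \ge\ 0,
\]
whence $\lambda_1(\overline{T^{\prime}})\ge\lambda_1(\overline{T})$.

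For the equality discussion, suppose $W\neq\emptyset$ and $\lambda_1(\overline{T^{\prime}})=\lambda_1(\overline{T})$. Then every inequality in the displayed chain is an equality; tracing them back forces $x_u=x_v$, $d_{\overline{T^{\prime}}}(u,w)=2$ for all $w\in W$, and $d_{\overline{T^{\prime}}}(x,y)=d_{\overline{T}}(x,y)$ for every pair other than the $\{u,w\}$ and $\{v,w\}$ with $w\in W$. Moreover, equality in $\lambda_1(\overline{T^{\prime}})\ge X^{t}D(\overline{T^{\prime}})X$ forces the unit positive vector $X$ to be the Perron vector of the irreducible matrix $D(\overline{T^{\prime}})$, so $D(\overline{T^{\prime}})X=\lambda_1(\overline{T^{\prime}})X=\lambda_1(\overline{T})X=D(\overline{T})X$; reading off the coordinate indexed by $v$, and using that the differences $d_{\overline{T^{\prime}}}(v,\cdot)-d_{\overline{T}}(v,\cdot)$ are nonzero only on $W$ (each equal to $-1$), gives $-\sum_{w\in W}x_w=0$, impossible since $X>0$ and $W\neq\emptyset$. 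Hence equality forces $W=\emptyset$, i.e.\ $N_T(v)=\{u\}$, completing the characterization.

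I expect the main obstacle to be the case analysis establishing that the pairs $\{v,w\}$ $(w\in W)$ are the only ones whose $\overline{T^{\prime}}$-distance drops below their $\overline{T}$-distance, together with keeping precise track of exactly where $\diam(\overline{T})=2$ is used; this is the delicate point, since proofs in the literature have been imprecise here (for instance by also invoking $\diam(\overline{T^{\prime}})\le 2$, which need not hold once the transformation decreases the diameter). Given that catalogue, the Rayleigh-quotient estimate and the eigenvector argument for the equality case are routine.
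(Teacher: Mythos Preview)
The paper does not supply its own proof of this statement: Theorem~\ref{transfom 1} is quoted from \cite{Lin-Dist-comple} without argument, and the paper's original contribution is the analogous result for the generalized tree shift (Theorem~\ref{dist-comple}). Your proof is correct and is essentially the same Rayleigh-quotient argument the paper uses for Theorem~\ref{dist-comple}: there the distance comparison is packaged as $D(\overline{T})=2A(T)+J-I$ and $D(\overline{T_1})\ge 2A(T_1)+J-I$ entrywise, which is exactly your catalogue of distance changes in compact form, and the strictness is obtained (as you do) by reading off one coordinate of the eigen-equation once equality would force $X$ to be Perron for $D(\overline{T'})$. The only cosmetic difference is that the paper looks at the $u$-row while you look at the $v$-row; both give $\sum_{w\in W}x_w=0$ and the same contradiction.
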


\begin{thm}[{\cite[Theorem $3$]{Lin-Dist-comple}}]\label{transfom 2}
	Let $T$ be a tree on $n$ vertices with $\diam(T)\geq 4$. Suppose that $e$ is a non-pendant edge of $T$. Let $T^{\prime}$ be the tree obtained from $T$ by collapsing $e$, identifying the vertices of $e$ to a single vertex $z$, and adding a pendant edge at $z$. Then $\lambda_1(\overline{T^{\prime}})> \lambda_1(\overline{T})$.
\end{thm}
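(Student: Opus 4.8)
The plan is to obtain the statement as a strict‑inequality refinement of Theorem~\ref{transfom 1}, after recognising the edge‑collapsing operation as a single Kelmans transformation. Write $e=uv$ and let $X=(x_1,x_2,\hdots,x_n)^t$ be the unit Perron vector of $D(\overline{T})$. Relabelling the two endpoints of $e$ if necessary, I may assume $x_u\ge x_v$. Put
\[
T''=T-\{vw: w\in N_T(v)\setminus\{u\}\}+\{uw: w\in N_T(v)\setminus\{u\}\},
\]
the tree obtained from $T$ by the Kelmans transformation across the edge $uv$. Since $\diam(T)\ge 4$, $uv\in E(T)$ and $x_u\ge x_v$, Theorem~\ref{transfom 1} yields $\lambda_1(\overline{T''})\ge\lambda_1(\overline{T})$, with equality if and only if $N_T(v)=\{u\}$.

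Next I would promote this to a strict inequality using the hypothesis on $e$. Since $e$ is a non-pendant edge, its endpoint $v$ is not a pendant vertex of $T$, so $|N_T(v)|\ge 2$ and hence $N_T(v)\ne\{u\}$. Therefore the equality case of Theorem~\ref{transfom 1} cannot occur and $\lambda_1(\overline{T''})>\lambda_1(\overline{T})$. (Note that Theorem~\ref{transfom 1} applies legitimately here: by Remark~\ref{remark}, $\diam(T'')\ge\diam(T)-1\ge 3$, so $T''$ is not a star and $\overline{T''}$ is connected.)

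Finally I would check that $T''$ is isomorphic to the tree $T'$ in the statement, which makes $\lambda_1(\overline{T'})=\lambda_1(\overline{T''})$ and finishes the argument. In $T''$ the vertex $v$ has become pendant, attached only to $u$, while $u$ is adjacent to $\big(N_T(u)\setminus\{v\}\big)\cup\{v\}\cup\big(N_T(v)\setminus\{u\}\big)$ and all other adjacencies are those of $T$. On the other hand, collapsing $e$ in $T$ creates a vertex $z$ adjacent to $\big(N_T(u)\cup N_T(v)\big)\setminus\{u,v\}$, and adding a new pendant $w$ at $z$ then produces $T'$; the bijection fixing every vertex of $V(T)\setminus\{u,v\}$ and sending $z\mapsto u$, $w\mapsto v$ is the desired isomorphism. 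The main (though routine) obstacle is exactly this bookkeeping: verifying that the labelling of the endpoints of $e$ forced by the condition $x_u\ge x_v$ is consistent with the non-pendancy used for strictness, and that the edge sets really do match under the stated correspondence. Beyond that, the proof is a direct appeal to Theorem~\ref{transfom 1}.
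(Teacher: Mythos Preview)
The paper does not give its own proof of Theorem~\ref{transfom 2}; it is quoted from \cite[Theorem~3]{Lin-Dist-comple} as background, with no argument supplied here. So there is nothing in the present paper to compare your proposal against.

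That said, your argument is correct. The collapse-and-append-pendant operation on a non-pendant edge $e=uv$ is, up to the isomorphism you exhibit, exactly one Kelmans move across $uv$; since a non-pendant edge has both endpoints of degree at least~$2$, the equality case $N_T(v)=\{u\}$ in Theorem~\ref{transfom 1} is excluded irrespective of which endpoint the WLOG relabelling designates as $v$, and the strict inequality follows. Your check that $\diam(T'')\ge 3$ (so $\overline{T''}$ is connected and its distance matrix is defined) is also appropriate. This is the natural route, and almost certainly how Lin and Drury derive their Theorem~3 from their Theorem~2 in \cite{Lin-Dist-comple}.
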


Using these results, Lin and Drury \cite{Lin-Dist-comple} characterized the trees whose complements have the minimum distance spectral radius.

\begin{thm}[{\cite[Theorem $5$]{Lin-Dist-comple}}]\label{Dist-comple-min}
	Let $T$ be a tree on $n\geq 4$ vertices with $T\ncong K_{1,n-1}$. Then $\lambda_1(\overline{T^{\prime}})> \lambda_1(\overline{P_n})$ with equality if and only if $T$ is isomorphic to $P_n$.
\end{thm}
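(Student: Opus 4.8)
I would establish that $\lambda_1(\overline{T})\ge\lambda_1(\overline{P_n})$ with equality exactly when $T\cong P_n$, arguing by induction on the number $p$ of pendant vertices of $T$. The case $n=4$ is immediate ($T\ncong K_{1,3}$ leaves only $T\cong P_4$), so assume $n\ge 5$; then $T\ncong K_{1,n-1}$ forces $\diam(T)\ge 3$, and the base case $p=2$ is $T\cong P_n$. For the inductive step with $p\ge 3$, the aim is to produce a tree $T_0$ on $n$ vertices with $T_0\ncong K_{1,n-1}$ and exactly $p-1$ pendant vertices such that $T$ is obtained from $T_0$ either by the Kelmans transformation of Theorem~\ref{transfom 1} (with its hypothesis $x_u\ge x_v$ holding for the Perron vector of $D(\overline{T_0})$) or by the edge-collapse of Theorem~\ref{transfom 2}. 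Either way $\lambda_1(\overline{T})\ge\lambda_1(\overline{T_0})$, and since equality in Theorem~\ref{transfom 1} would force $N_{T_0}(v)=\{u\}$, hence $T\cong T_0$ --- impossible because the two trees have different pendant counts --- we get $\lambda_1(\overline{T})>\lambda_1(\overline{T_0})\ge\lambda_1(\overline{P_n})$ by the induction hypothesis.

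When $\diam(T)=3$, $T$ is a double star whose centres carry $a\ge b\ge 1$ pendant vertices with $a+b=n-2\ge 3$, so $a\ge 2$; here I would let $T_0$ be the caterpillar obtained from the double star with centre multiplicities $a-1$ and $b$ by subdividing its central edge once. Then $\diam(T_0)=4$, $T_0$ has $p-1=n-3$ pendant vertices, and collapsing the new edge incident to the centre of multiplicity $a-1$ and re-attaching a pendant vertex returns $T$, so $T$ is the image of $T_0$ under Theorem~\ref{transfom 2} (when $n=5$ one has $T_0\cong P_5$ and concludes at once). When instead $\diam(T)\ge 4$ and some pendant vertex $w$ of $T$ is adjacent to a vertex $z$ with $\deg_T(z)\ge 3$, I would split $z$ into two adjacent vertices $x,y$, dealing out the neighbours of $z$ other than $w$ so that each of $x,y$ receives at least one (possible as $\deg_T(z)\ge 3$), and delete $w$; a short check shows the result $T_0$ is a tree on $n$ vertices with $p-1$ pendant vertices and $\diam(T_0)\ge 4$ (using $T\ncong K_{1,n-1}$) in which $xy$ is a non-pendant edge, and that collapsing $xy$ and re-attaching $w$ returns $T$ --- again an instance of Theorem~\ref{transfom 2}.

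The case I expect to be the genuine obstacle is the one not covered above: $\diam(T)\ge 4$ while every pendant vertex of $T$ sits at the end of a pendant path of length $\ge 2$, so that no pendant vertex is adjacent to a vertex of degree $\ge 3$ --- the smallest example being the $7$-vertex spider with three legs of length $2$. For such $T$, reversing Theorem~\ref{transfom 2} is impossible (it needs a pendant vertex at a vertex of degree $\ge 3$), and reversing Theorem~\ref{transfom 1} can only be applied at a pendant vertex whose unique neighbour has degree $2$, where it merely reproduces $T$; thus neither transformation lowers $p$ and the induction above stalls. This is, I believe, exactly the ambiguity in the existing literature that the present note is designed to repair: one should instead use the stronger fact that the generalized tree shift does not decrease $\lambda_1(\overline{\cdot})$, which lets one merge two pendant paths of such a tree into a single longer path --- a generalized tree shift --- so lowering $p$ and ultimately reducing $T$ to $P_n$ with at least one strict step whenever $T\not\cong P_n$. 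With that input the induction closes, yielding $\lambda_1(\overline{T})>\lambda_1(\overline{P_n})$ for every tree $T\ncong P_n,K_{1,n-1}$.
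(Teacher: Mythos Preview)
Your diagnosis of the gap is exactly right, and the proposal is essentially correct once the generalized tree shift (GTS) monotonicity is taken as given. However, your route is more roundabout than the paper's. You run an induction on the number of pendant vertices and split into three cases, handling the first two via the edge-collapse transformation of Theorem~\ref{transfom 2} and only invoking GTS for the residual ``spider-like'' case where every pendant sits at the end of a pendant path of length at least~$2$. The paper, by contrast, discards the Kelmans-type moves altogether: it proves directly (Theorem~\ref{dist-comple}) that a proper GTS strictly increases $\lambda_1(\overline{\,\cdot\,})$, and then appeals once to Csikv\'ari's Theorem~\ref{GTS-minimal}, which guarantees that \emph{every} non-path tree is the image of some proper GTS. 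That single observation already supplies the inductive step in all three of your cases simultaneously, so your separate treatments of $\diam(T)=3$ and of the ``pendant adjacent to a branch vertex'' subcase become redundant. The upshot is a two-line proof in place of your case analysis.

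Two smaller remarks. First, your Case~2 construction (splitting the branch vertex $z$ into an edge $xy$) does work, but the claim $\diam(T_0)\ge 4$ deserves a line of justification: since $\deg_T(z)\ge 3$, the leaf $w$ cannot be an endpoint of every diametral path of $T$, so $\diam(T-w)=\diam(T)\ge 4$, and splitting a vertex never shortens distances. Second, in Case~3 you describe the reverse GTS as ``merging two pendant paths into a single longer path''; this is one concrete way to realise a GTS preimage, but what you actually need is precisely Csikv\'ari's theorem in full, and once you cite it there is no reason not to use it from the outset.
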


In the proof of Theorem \ref{Dist-comple-min}, Lin and Drury mentioned that any tree $T$ of order $n$ could be obtained from $P_n$ by repeatedly applying the Kelmans transformation as in Theorem \ref{transfom 2}. But this is not true in general. Next, we give some counterexamples.

The tree $\Tilde{T}$, given in Figure \ref{fig1}, can not be obtained from $P_7$ by applying the graft transformation as in Theorem \ref{transfom 2}. Because $\diam(\Tilde{T})=\diam(P_7)-2$ and number of pendant vertices in $\Tilde{T}$ equals the number of pendant vertices in $P_7+1$.
This example is given in \cite{csikvari2010poset}.

\begin{figure}[h!]
	\centering
	\includegraphics[scale= 0.70]{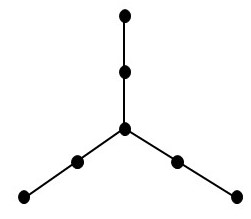}
	\caption{The tree $\Tilde{T}$ on $7$ vertices.}
	\label{fig1}
\end{figure}

Any tree $T$ of order $n$ with $3$ pendant vertices and $\diam(T)\leq n-3$ can not be obtained from $P_n$ by applying the graft transformation as in Theorem \ref{transfom 2}.

So, the proof of Theorem $5$ in \cite{Lin-Dist-comple} has some ambiguity. The same technique is used in the proofs of  Theorem $4.2$ by Li et al. \cite{Li-Distsign-comple} and Theorem $3.9$ by Qin et al. \cite{qin2021dalpha} for characterizing the complements of trees with minimum distance signless Laplacian spectral radius and $D_\alpha$-spectral radius, respectively. 

In this article, we show that the generalized tree shift transformation increases the distance spectral radius, distance signless-Laplacian spectral radius, and the $D_\alpha$-spectral radius of complements of trees. Using these results, we correct the proofs of Theorem $5$ in \cite{Lin-Dist-comple}, Theorem $4.2$ of \cite{Li-Distsign-comple} and Theorem $3.9$ of \cite{qin2021dalpha}.

\section{Main results}

In this section, first, we recall the definition of generalized tree shift and some of its properties. We show that the generalized tree shift increases the distance spectral radius, distance signless-Laplacian spectral radius, and the $D_\alpha$-spectral radius of complements of trees. Using these results, we complete the proofs for characterizing the trees whose complements have minimum distance spectral radius, distance signless Laplacian spectral radius, and  $D_\alpha$-spectral radius, respectively. Next we recall generalized tree shift (GTS) transformation introduced by Csikv\'{a}ri \cite{csikvari2010poset}.

\begin{dfn}[{\cite[Definition 2.1]{csikvari2010poset}}]\label{GTS}
	Let $T$ be a tree and $u,v\in V(T)$ such that all the interior vertices of the path $uPv$ (if they exist) have degree $2$. Let $w$ be the neighbor of $v$ lying on the $uPv$ path (it is possible that $u=w$). Now, construct a tree $T_1$ from $T$ by deleting all the edges between $v$ and $N_T(v)\setminus \{w\}$ and add the edges between $u$ and $N_T(v)\setminus \{w\}$. 
\end{dfn}

  Note that if $u$ or $v$ is a pendant vertex, then $T_1$ isomorphic to  $T$, otherwise the number of pendant vertices in $T_1$ equals the number of pendant vertices in $T$ plus one. In the latter case, the GTS is called proper. Define $T_1>T$ if $T_1$ can be obtained from $T$ by some proper GTS. The relation $>$ induces a poset on the tree on $n$ vertices \cite{csikvari2010poset}.

In \cite{csikvari2010poset}, Csikv\'ari proved the following theorem about the minimal element of the poset.

\begin{thm}[{\cite[Theorem 2.4]{csikvari2010poset}}]\label{GTS-minimal}
	Every tree different from the path is the image of some
	proper generalized tree shift.  
\end{thm}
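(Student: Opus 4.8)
The plan is to invert a single proper GTS. First I reverse-engineer what the image of a proper GTS must look like: if $T$ is obtained from a tree $S$ by a proper GTS with parameters $u,v,w$ as in Definition~\ref{GTS}, then in $T$ the vertex $v$ is a pendant vertex with unique neighbour $w$, the vertex $u$ has degree at least $3$ (it is non-pendant in $S$ and picks up at least one new neighbour), and every interior vertex of the $u$--$v$ path in $T$ has degree $2$ (the GTS alters edges only at $u$ and $v$ and leaves $vw$ in place). So, given a tree $T\ncong P_n$, I look for a pendant vertex $v$ of $T$ with neighbour $w$ and a vertex $u$ with $\deg_T(u)\ge 3$ such that all interior vertices of the $u$--$v$ path $P$ in $T$ have degree $2$; such a triple will let me reconstruct a suitable $S$.

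Step 1 (existence of the configuration). Pick any pendant vertex $v$ of $T$ and walk inward: leave $v$ through $w$, and as long as the current vertex has degree $2$ continue to its other neighbour (in a tree this is always a new vertex). Since $T$ is finite the walk terminates, and it can terminate only at a vertex of degree $1$ or of degree $\ge 3$. If it terminates at a vertex $u$ with $\deg_T(u)\ge 3$, then the path traced from $u$ back to $v$ has all interior vertices of degree $2$, as required. If instead it terminates at a pendant vertex, then the traced path is a subgraph in which every vertex has all of its neighbours on the path, hence a connected component of $T$; since $T$ is connected this forces $T=P_n$, contrary to hypothesis.

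Step 2 (constructing $S$ and recovering $T$). Since $u$ is an endpoint of $P$, exactly one neighbour of $u$ lies on $P$, so from $\deg_T(u)\ge 3$ we may choose a neighbour $a$ of $u$ with $a\notin V(P)$ (this also covers the degenerate case $u=w$, where $P$ is the single edge $uv$ and has no interior vertices). Put $S=T-ua+va$. Deleting $ua$ splits $T$ into the component containing $u$ — which still contains all of $P$, in particular $v$ — and the component containing $a$, and adding $va$ rejoins them, so $S$ is a tree on $V(T)$. In $S$ the $u$--$v$ path is still $P$, its interior vertices are unchanged and still have degree $2$, $w$ is still the neighbour of $v$ on $P$, and $N_S(v)\setminus\{w\}=\{a\}$. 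Hence the GTS applied to $S$ with parameters $u,v,w$ deletes $va$ and adds $ua$, producing $S-va+ua=T$.

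Step 3 (properness) and conclusion. In $S$ we have $\deg_S(v)=2$ and $\deg_S(u)=\deg_T(u)-1\ge 2$, so neither $u$ nor $v$ is pendant in $S$; thus the GTS is proper and $T>S$, i.e.\ $T$ is the image of a proper GTS. The one genuinely delicate point is Step~1 — certifying that the inward walk from a leaf is forced to reach a branch vertex unless $T$ is a path — while Steps 2--3 are routine bookkeeping that must nonetheless be carried out with care, in particular the $u=w$ case and the verification that $S$ is again connected.
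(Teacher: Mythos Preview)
The paper does not give its own proof of this statement; it is quoted verbatim from \cite{csikvari2010poset} and used as a black box, so there is no in-paper argument to compare against. Your proof is correct: the inward walk from a leaf must reach a vertex of degree at least $3$ unless $T$ is a path, and detaching one off-path neighbour $a$ of that branch vertex $u$ and reattaching it at the leaf $v$ produces a tree $S$ from which a proper GTS with parameters $u,v,w$ recovers $T$. The edge cases you flag (the possibility $u=w$, the connectedness of $S$, and the check that $a\notin V(P)$ so that the path $P$ and the degrees of its interior vertices are unchanged in $S$) are all handled correctly. This is in fact the same idea as in Csikv\'ari's original proof.
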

The star is the unique maximal element, and
the path is the unique minimal element of the induced
poset of the generalized tree shift. Thus, it is easy to see that any tree on $n$ vertices other than the path can be obtained from the path graph $P_n$ by repeatedly applying the generalized tree shift.

In the following theorems, we prove that GTS increases the distance spectral radius, distance signless Laplacian spectral radius, and $D_\alpha$-spectral radius of complements of trees. 

\begin{thm}\label{dist-comple}
	Let $T$ be a tree of order $n$ with $\diam(T)\geq 4$, and let $T_1$ be the tree obtained from $T$ by a proper GTS. Then $\lambda_1(\overline{T_1})> \lambda_1(\overline{T})$.
\end{thm}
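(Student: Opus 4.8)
The plan is to combine the Rayleigh quotient of $D(\overline T)$ with the observation that, for a tree $S$ with $\diam(\overline S)\le 2$, the distance matrix of $\overline S$ is simply $(J-I)+A(S)$, where $J$ is the all-ones matrix and $A(\cdot)$ the adjacency matrix. Two preliminary facts are needed. First, since $\diam(T)\ge 4$ one checks that $\diam(\overline T)=2$, so $D(\overline T)=(J-I)+A(T)$; and for \emph{any} tree $S$ whose complement is connected we have, entrywise, $D(\overline S)\ge (J-I)+A(S)$, because $d_{\overline S}(i,j)\ge 1$ for $i\ne j$ and $d_{\overline S}(i,j)\ge 2$ whenever $ij\in E(S)$. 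Second, a proper GTS turns the vertex $v$ into a pendant vertex and changes no other vertex's pendant-status, so it decreases the number of non-pendant vertices by exactly one; since $\diam(T)\ge 4$ forces $T$ to have at least three non-pendant vertices, $T_1$ has at least two, hence $T_1$ is not a star and $\overline{T_1}$ is connected, so $\lambda_1(\overline{T_1})$ is well defined.

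Fix notation for the GTS as in Definition \ref{GTS}: write the path as $u=p_0,p_1,\dots,p_\ell=v$ with $p_1,\dots,p_{\ell-1}$ of degree $2$ and $w=p_{\ell-1}$, let $r_1,\dots,r_k$ be the neighbours of $v$ other than $w$, and let $s_1,\dots,s_m$ be the neighbours of $u$ other than $p_1$; since the GTS is proper, $k=\deg_T(v)-1\ge 1$ and $m=\deg_T(u)-1\ge 1$. Thus $T_1=T-\{vr_i:1\le i\le k\}+\{ur_i:1\le i\le k\}$. Let $T_1'=T-\{us_j:1\le j\le m\}+\{vs_j:1\le j\le m\}$ be the tree produced by the reverse GTS, which is again a proper GTS with the roles of $u$ and $v$ interchanged. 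Both $T_1$ and $T_1'$ are obtained by attaching to a single vertex the forest consisting of the subtrees rooted at $r_1,\dots,r_k,s_1,\dots,s_m$ together with a pendant path of length $\ell$; hence $T_1\cong T_1'$. Therefore, letting $X=(x_1,\dots,x_n)^t>0$ be the unit Perron vector of $D(\overline T)$, we may assume after possibly replacing $T_1$ by the isomorphic $T_1'$ (that is, swapping the names of $u$ and $v$) that $x_u\ge x_v$.

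The matrix $A(T_1)-A(T)$ has entry $+1$ at the pairs $\{u,r_i\}$, entry $-1$ at the pairs $\{v,r_i\}$, and $0$ elsewhere, so $X^t\bigl(A(T_1)-A(T)\bigr)X=2\sum_{i=1}^k x_{r_i}(x_u-x_v)\ge 0$. Using the preliminary facts and the Rayleigh characterization of the largest eigenvalue,
\[
\lambda_1(\overline{T_1})\ \ge\ X^tD(\overline{T_1})X\ \ge\ X^t\bigl((J-I)+A(T_1)\bigr)X\ =\ \lambda_1(\overline T)+2\sum_{i=1}^k x_{r_i}(x_u-x_v)\ \ge\ \lambda_1(\overline T).
\]
Suppose equality holds throughout. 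Then $X^tD(\overline{T_1})X=\lambda_1(\overline{T_1})$, so $X$ is an eigenvector of $D(\overline{T_1})$ for its largest eigenvalue, which equals $\lambda_1(\overline T)$; hence $\bigl(D(\overline{T_1})-D(\overline T)\bigr)X=0$. But in the $u$-th coordinate we have, for every $j$, $D(\overline{T_1})_{uj}-D(\overline T)_{uj}\ge A(T_1)_{uj}-A(T)_{uj}\ge 0$ (the neighbourhood of $u$ only grows from $T$ to $T_1$), while for $j=r_i$ one has $D(\overline T)_{ur_i}=1$ (the $u$–$r_i$ path in $T$ passes through $v$) and $D(\overline{T_1})_{ur_i}\ge 2$ (since $ur_i\in E(T_1)$); so the $u$-th coordinate of $\bigl(D(\overline{T_1})-D(\overline T)\bigr)X$ is at least $\sum_{i=1}^k x_{r_i}>0$, a contradiction. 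Hence $\lambda_1(\overline{T_1})>\lambda_1(\overline T)$.

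The step I expect to be the crux — and where the argument genuinely departs from the Kelmans-transformation proof of Theorem \ref{transfom 1} — is the structural isomorphism $T_1\cong T_1'$: the two vertices $u,v$ of a GTS are forced by the shape of $T$ rather than chosen via the Perron vector, so without this symmetry one cannot assume $x_u\ge x_v$ and the term $2\sum_i x_{r_i}(x_u-x_v)$ could be negative. The only other point requiring care is that a proper GTS may lower the diameter, so $\diam(T_1)$ can be $3$ rather than $\ge 4$; this is what forces the use of the one-sided bound $D(\overline{T_1})\ge(J-I)+A(T_1)$ in place of an identity, and it is harmless precisely because that inequality is all the argument uses.
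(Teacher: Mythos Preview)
Your proof is correct and follows the same strategy as the paper: identify $D(\overline T)=(J-I)+A(T)$, bound $D(\overline{T_1})\ge(J-I)+A(T_1)$ entrywise, apply the Rayleigh quotient with the Perron vector of $D(\overline T)$, and rule out equality by inspecting the $u$-th row. Your explicit isomorphism $T_1\cong T_1'$ is exactly what the paper suppresses under its bare ``without loss of generality, assume $x_u\ge x_v$'', so you have made rigorous a step the paper leaves implicit (and, incidentally, your formula $(J-I)+A(T)$ corrects a harmless factor-of-two slip in the paper's $2A(T)+J-I$).
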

\begin{proof}
	Let $X=(x_1,x_2,\hdots,x_n)^t$ be the unit perron vector of $\overline{T}$ corresponding to $\lambda_1(\overline{T})$. Without loss of generality, assume that $x_u\geq x_v$. Therefore, 
	\begin{align*}
		T_1=T-\{vz: z\in S\}+\{uz: z\in S\}, \quad \text{where, $S=N_T(v)\setminus \{w\}$}.    
	\end{align*}
	
	It is easy to see that $D(\overline{T})=2A(T)+J-I$ and $D(\overline{T_1})\geq 2A(T)+J-I$ entrywise. Therefore, by Rayley's principle, we have 
	\begin{align*}
		\lambda_1(\overline{T_1})- \lambda_1(\overline{T})\geq &~ X^t\big(D(\overline{T_1})-D(\overline{T})\big)X\\
		\geq &~ X^t\big(2A(T_1)-2A(T)\big)X\\
		=&~ 4(x_u-x_v)\sum_{z\in S}x_z\\
		\geq &~ 0.
	\end{align*}
	
	If $\lambda_1(\overline{T_1})= \lambda_1(\overline{T})$, then for the vertex $u$, we have $0=\lambda_1(\overline{T_1})x_u- \lambda_1(\overline{T})x_u\geq \sum_{z\in S}x_z>0$, a contradiction. Therefore, $\lambda_1(\overline{T_1})> \lambda_1(\overline{T})$. This completes the proof.
\end{proof}

\begin{thm}\label{distsign-comple}
	Let $T$ be a tree of order $n$ with $\diam(T)\geq 4$, and let $T_1$ be the tree obtained from $T$ by a proper GTS. Then $\mu_1(\overline{T_1})> \mu_1(\overline{T})$.
\end{thm}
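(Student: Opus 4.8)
The plan is to mimic the proof of Theorem~\ref{dist-comple}, replacing the distance matrix by the distance signless Laplacian matrix and carefully tracking the extra diagonal (transmission) terms. First I would fix $X=(x_1,\dots,x_n)^t$ to be the unit Perron vector of $D^Q(\overline{T})$ associated with $\mu_1(\overline{T})$, and assume without loss of generality that $x_u\ge x_v$, so that the proper GTS produces $T_1=T-\{vz:z\in S\}+\{uz:z\in S\}$ with $S=N_T(v)\setminus\{w\}$. Since $\diam(T)\ge 4$ forces $\diam(\overline{T})=\diam(\overline{T_1})=2$, I would again use the identities $D(\overline{T})=2A(T)+J-I$ and $D(\overline{T_1})=2A(T_1)+J-I$ (the latter because $T_1$ also has diameter $\ge 3$ after a proper GTS when $\diam(T)\ge 4$, using Remark~\ref{remark}-type reasoning for GTS, so $\overline{T_1}$ is still connected of diameter $2$), hence $D(\overline{T_1})-D(\overline{T})=2A(T_1)-2A(T)$ exactly.

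The next step is to handle the transmission part. Because both complements have diameter $2$, the transmission of a vertex $v_i$ in $\overline{T}$ is $\Tr_{\overline{T}}(v_i)=2(n-1)-\deg_{\overline{T}}(v_i)=2(n-1)-(n-1-\deg_T(v_i))=(n-1)+\deg_T(v_i)$, and similarly in $\overline{T_1}$. A proper GTS moves the $|S|=\deg_T(v)-1\ge 1$ edges from $v$ to $u$; it does not change the degree of any vertex in $S$ nor of $w$, it increases $\deg_T(u)$ by $|S|$ and decreases $\deg_T(v)$ by $|S|$. Therefore $\Tr(\overline{T_1})-\Tr(\overline{T})=|S|(E_{uu}-E_{vv})$ where $E_{ii}$ is the diagonal matrix unit. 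Combining, $D^Q(\overline{T_1})-D^Q(\overline{T}) = 2(A(T_1)-A(T)) + |S|(E_{uu}-E_{vv})$. I would then apply Rayleigh's principle:
\begin{align*}
\mu_1(\overline{T_1})-\mu_1(\overline{T}) &\ge X^t\big(D^Q(\overline{T_1})-D^Q(\overline{T})\big)X\\
&= 4(x_u-x_v)\sum_{z\in S}x_z + |S|(x_u^2-x_v^2)\\
&= (x_u-x_v)\Big(4\sum_{z\in S}x_z + |S|(x_u+x_v)\Big)\ge 0,
\end{align*}
since $x_u\ge x_v$ and all entries of the Perron vector are positive.

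Finally, for the strict inequality I would argue as in Theorem~\ref{dist-comple}: if $\mu_1(\overline{T_1})=\mu_1(\overline{T})$, then $X$ must also be a Perron vector of $D^Q(\overline{T_1})$ and equality holds throughout, which in particular forces, reading the eigenvalue equation at vertex $u$ for both matrices, $0 = \mu_1(\overline{T_1})x_u-\mu_1(\overline{T})x_u \ge \big(2\sum_{z\in S}x_z + |S|x_u\big) > 0$ (the contribution of the new neighbours $z\in S$ of $u$ in $\overline{T_1}$, each at distance $2$, plus the increase in $u$'s transmission), a contradiction; hence $\mu_1(\overline{T_1})>\mu_1(\overline{T})$. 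The only genuinely delicate point is making sure that $\overline{T_1}$ still has diameter exactly $2$ so that the clean identity $D(\overline{T_1})=2A(T_1)+J-I$ and the transmission formula remain valid; this needs the observation that a proper GTS applied to a tree of diameter $\ge 4$ cannot produce a star or $P_3$-like structure (indeed $T_1$ still contains a path of length $\ge 3$), so $\diam(T_1)\ge 3$ and $\overline{T_1}$ is connected with diameter $2$. Everything else is a routine transcription of the argument for the distance matrix with the extra diagonal term handled as above.
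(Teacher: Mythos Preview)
Your overall strategy---Perron vector of $D^Q(\overline{T})$, assume $x_u\ge x_v$, Rayleigh quotient, then a contradiction at the $u$-row of the eigen\-equation---is exactly the paper's. The one substantive difference is that you package the off-diagonal change as an \emph{equality} $D(\overline{T_1})-D(\overline{T})=2\bigl(A(T_1)-A(T)\bigr)$ via the identity for $D(\overline{\,\cdot\,})$, and this is precisely where your argument has a gap. The implication ``$\diam(T_1)\ge 3\Rightarrow\diam(\overline{T_1})=2$'' that you invoke is false: $\diam(\overline{T_1})=2$ holds iff $T_1$ is not a double star, i.e.\ iff $\diam(T_1)\ge 4$, and a proper GTS on a diameter-$4$ tree can yield a double star. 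For instance, let $T$ be the path $a\,u\,z\,v\,b$ together with one extra pendant at $v$; then $\diam(T)=4$, the proper GTS along $uPv$ produces a double star $T_1$ with centers $u$ and $z$, and $d_{\overline{T_1}}(u,z)=3$, so your exact identity and your transmission formula both fail there.

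The paper sidesteps this by never claiming equality for distances in $\overline{T_1}$: it only records $d_{\overline{T_1}}(u,v_1)\ge 2$ and $d_{\overline{T_1}}(u,v_2)\ge 2$ and carries these as inequalities through the quadratic form, which suffices since $X>0$. Your proof is repaired the same way: any excess of $D^Q(\overline{T_1})$ over your formula occurs only in the row and column of $u$ (the only pair at $\overline{T_1}$-distance greater than $2$ is the pair of double-star centers, one of which must be $u$), so it contributes a further nonnegative amount both to the Rayleigh lower bound and to the $u$-th coordinate in the strict-inequality step, and the remainder of your argument goes through unchanged.
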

\begin{proof}
	Let $z$ be the neighbour of $u$ lying on the $uPv$ path, and let $S_1=N_T(u)\setminus \{z\}$ and $S_2=N_T(v)\setminus \{w\}$. Suppose that $X=(x_1,x_2,\hdots,x_n)^t$ be the unit perron vector of $\overline{T}$ corresponding to $\mu_1(\overline{T})$. Without loss of generality, assume that $x_u\geq x_v$. Therefore, 
	\begin{align*}
		T_1=T-\{vv_2: v_2\in S_2\}+\{uv_2: v_2\in S_2\}.
	\end{align*} 
	
	If $v_1\in S_1$ and $v_2\in S_2$, then we have 
	\begin{align*}
		d_{\overline{T}}(u,v_1)=2, \qquad  d_{\overline{T_1}}(u,v_1)\geq 2;\\
		d_{\overline{T}}(u,v_2)=1, \qquad  d_{\overline{T_1}}(u,v_2)\geq 2;\\
		d_{\overline{T}}(v,v_2)=2, \qquad  d_{\overline{T_1}}(v,v_2)=1;
	\end{align*}
	and the distances of other pair of vertices in $\overline{T}$ and $\overline{T_1}$ are same. Now, by Rayley's principle, we have 
	\begin{align*}
		\mu_1(\overline{T_1})- \mu_1(\overline{T})\geq &~ X^t\big(D^Q(\overline{T_1})-D^Q(\overline{T})\big)X\\
		= &~2x_u \sum_{v_1\in S_1}\big( d_{\overline{T_1}}(u,v_1)- d_{\overline{T}}(u,v_1)\big)x_{v_1}+2x_u\sum_{v_2\in S_2}\big( d_{\overline{T_1}}(u,v_2)- d_{\overline{T}}(u,v_2)\big)x_{v_2}\\
		&~ +2x_v\sum_{v_2\in S_2}\big( d_{\overline{T_1}}(v,v_2)- d_{\overline{T}}(v,v_2)\big)x_{v_2}+x_u^2 \sum_{v_1\in S_1}\big( d_{\overline{T_1}}(u,v_1)- d_{\overline{T}}(u,v_1)\big)\\
		&~ +x_u^2\sum_{v_2\in S_2}\big( d_{\overline{T_1}}(u,v_2)- d_{\overline{T}}(u,v_2)\big)+x_v^2 \sum_{v_2\in S_2}\big( d_{\overline{T_1}}(v,v_2)- d_{\overline{T}}(v,v_2)\big)\\
		\geq &~ 2x_u \sum_{v_1\in S_1}\big( d_{\overline{T_1}}(u,v_1)- d_{\overline{T}}(u,v_1)\big)x_{v_1}+x_u^2 \sum_{v_1\in S_1}\big( d_{\overline{T_1}}(u,v_1)- d_{\overline{T}}(u,v_1)\big)\\
		&~ +2(x_u-x_v)\sum_{v_2\in S_2}x_{v_2}+(x_u^2-x_v^2)|S_2|\\
		\geq &~ (x_u-x_v)\Big(2\sum_{v_2\in S_2}x_{v_2}+(x_u+x_v)|S_2|\Big)\\
		\geq &~ 0.
	\end{align*}
	
	If $\mu_1(\overline{T_1})= \mu_1(\overline{T})$, then for the vertex $u$, we have $0=\mu_1(\overline{T_1})x_u- \mu_1(\overline{T})x_u\geq \sum_{v_2\in S_2}x_{v_2}+x_u |S_2|>0$, a contradiction. Therefore, $\mu_1(\overline{T_1})> \mu_1(\overline{T})$. This completes the proof.
\end{proof}

\begin{thm}\label{dalpha-comple}
	Let $T$ be a tree of order $n$ with $\diam(T)\geq 4$, and let $T_1$ be the tree obtained from $T$ by a proper GTS. Then $\rho_\alpha(\overline{T_1})> \rho_\alpha(\overline{T})$.
\end{thm}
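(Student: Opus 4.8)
The plan is to mirror the proof of Theorem \ref{distsign-comple} almost verbatim, replacing the distance signless Laplacian matrix $D^Q$ by the $D_\alpha$-matrix $D_\alpha = \alpha\Tr + (1-\alpha)D$ and checking that the same sign analysis survives the convex combination. First I would let $X=(x_1,\dots,x_n)^t$ be the unit Perron vector of $D_\alpha(\overline{T})$ associated with $\rho_\alpha(\overline{T})$, set $z$ to be the neighbour of $u$ on the path $uPv$, and put $S_1 = N_T(u)\setminus\{z\}$, $S_2 = N_T(v)\setminus\{w\}$, exactly as before. Without loss of generality $x_u \ge x_v$, so $T_1 = T - \{vv_2 : v_2\in S_2\} + \{uv_2 : v_2 \in S_2\}$, and the only distances that change in passing from $\overline{T}$ to $\overline{T_1}$ are the three families listed in Theorem \ref{distsign-comple}: $d(u,v_1)$ for $v_1\in S_1$ goes from $2$ to $\ge 2$; $d(u,v_2)$ for $v_2\in S_2$ goes from $1$ to $\ge 2$; $d(v,v_2)$ for $v_2\in S_2$ goes from $2$ to $1$.

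Next I would apply the Rayleigh principle $\rho_\alpha(\overline{T_1}) - \rho_\alpha(\overline{T}) \ge X^t\big(D_\alpha(\overline{T_1}) - D_\alpha(\overline{T})\big)X$ and expand the right-hand side as $(1-\alpha)$ times the off-diagonal ("$D$") contribution plus $\alpha$ times the diagonal ("$\Tr$") contribution. The $D$-part is, up to the factor $(1-\alpha)$, precisely the expression that appears in the proof of Theorem \ref{distsign-comple}: it equals a sum of $2x_u(d_{\overline{T_1}}-d_{\overline{T}})x_{v_1}$, $2x_u(\cdots)x_{v_2}$ and $2x_v(\cdots)x_{v_2}$ terms, which the same three-line estimate bounds below by $(x_u - x_v)\cdot 2\sum_{v_2\in S_2} x_{v_2} \ge 0$ after discarding the manifestly nonnegative $S_1$ contribution. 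The $\Tr$-part is a weighted sum of $x_i^2(\Tr_{\overline{T_1}}(v_i) - \Tr_{\overline{T}}(v_i))$, and the transmission changes are obtained by summing the tabulated distance changes: $\Tr(u)$ increases (it loses nothing and gains from the $S_2$-edges becoming distance $\ge 2$), $\Tr(v)$ decreases by exactly $|S_2|$, and $\Tr(v_2)$ for $v_2\in S_2$ changes by $d_{\overline{T_1}}(u,v_2) - d_{\overline{T}}(u,v_2) + d_{\overline{T_1}}(v,v_2) - d_{\overline{T}}(v,v_2) = (\ge 1) + (-1) \ge 0$; the transmissions of all other vertices are unchanged. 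So the diagonal part, after again dropping the nonnegative $u$- and $S_2$-contributions and keeping only the $-x_v^2|S_2|$ term against the $+x_u^2|S_2|$ term, is bounded below by $(x_u^2 - x_v^2)|S_2| = (x_u - x_v)(x_u+x_v)|S_2| \ge 0$. Combining, $\rho_\alpha(\overline{T_1}) - \rho_\alpha(\overline{T}) \ge (x_u - x_v)\big(2(1-\alpha)\sum_{v_2\in S_2}x_{v_2} + \alpha(x_u+x_v)|S_2|\big) \ge 0$.

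Finally, to get the strict inequality I would argue as in the previous two proofs: if $\rho_\alpha(\overline{T_1}) = \rho_\alpha(\overline{T})$, then reading off the eigenvalue equation at the coordinate $u$ gives $0 = \rho_\alpha(\overline{T_1})x_u - \rho_\alpha(\overline{T})x_u$, which must be at least a strictly positive quantity — namely the increase in row $u$ of $D_\alpha$ applied to $X$, which contains $(1-\alpha)\sum_{v_2\in S_2}x_{v_2} + \alpha x_u |S_2|$ (positive because the Perron vector has strictly positive entries and $S_2\ne\emptyset$ as the GTS is proper) — a contradiction. Hence $\rho_\alpha(\overline{T_1}) > \rho_\alpha(\overline{T})$.

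The one point that needs genuine care, rather than mechanical translation, is the boundary behaviour in $\alpha$. For $\alpha = 1$ the off-diagonal contribution drops out entirely and strictness must come solely from the diagonal term $\alpha x_u|S_2|$, while for $\alpha = 0$ it comes solely from the off-diagonal term; the bound $(x_u-x_v)\big(2(1-\alpha)\sum x_{v_2} + \alpha(x_u+x_v)|S_2|\big)$ was written precisely so that at least one summand is strictly positive for every $\alpha\in[0,1]$. I would therefore state the argument for the full range $\alpha\in[0,1]$ at once, noting that $D_0 = D$ and $D_1 = \Tr$ recover the distance and transmission cases, so the main obstacle is really just bookkeeping: making sure the transmission-change computations for $u$, $v$ and the vertices of $S_2$ are carried out correctly and that no other vertex's transmission is disturbed, which follows because only the edges incident to $u$, $v$ and $S_2$ are altered and the listed distance changes are exhaustive.
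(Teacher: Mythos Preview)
Your proposal is correct and follows the paper's own proof essentially line for line: the same setup $(S_1,S_2,z,w)$, the same three families of changed distances, the same Rayleigh estimate leading to $(x_u-x_v)\big(2(1-\alpha)\sum_{v_2\in S_2}x_{v_2}+\alpha(x_u+x_v)|S_2|\big)\ge 0$, and the same row-$u$ eigenvalue contradiction for strictness. The only cosmetic difference is that you phrase the diagonal contribution in terms of transmission changes rather than writing out the individual $d_{\overline{T_1}}-d_{\overline{T}}$ sums; one small slip is that the transmission of each $v_1\in S_1$ can also increase (via the $d(u,v_1)$ term), so ``all other vertices unchanged'' is not quite right, but since that contribution is nonnegative your lower bound is unaffected.
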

\begin{proof}
	Let $z$ be the neighbour of $u$ lying on the $uPv$ path, and let $S_1=N_T(u)\setminus \{z\}$ and $S_2=N_T(v)\setminus \{w\}$. Suppose that $X=(x_1,x_2,\hdots,x_n)^t$ be the unit perron vector of $\overline{T}$ corresponding to $\rho_\alpha(\overline{T})$. Without loss of generality, assume that $x_u\geq x_v$. Therefore, 
	\begin{align*}
		T_1=T-\{vv_2: v_2\in S_2\}+\{uv_2: v_2\in S_2\}.
	\end{align*} 
	
	If $v_1\in S_1$ and $v_2\in S_2$, then we have 
	\begin{align*}
		d_{\overline{T}}(u,v_1)=2, \qquad  d_{\overline{T_1}}(u,v_1)\geq 2;\\
		d_{\overline{T}}(u,v_2)=1, \qquad  d_{\overline{T_1}}(u,v_2)\geq 2;\\
		d_{\overline{T}}(v,v_2)=2, \qquad  d_{\overline{T_1}}(v,v_2)=1;
	\end{align*}
	and the distances of other pair of vertices in $\overline{T}$ and $\overline{T_1}$ are same. Now, by Rayley's principle, we have 
	\begin{align*}
		\rho_\alpha(\overline{T_1})- \rho_\alpha(\overline{T})\geq &~ X^t\big(D-\alpha(\overline{T_1})-D_\alpha(\overline{T})\big)X\\
		= &~2(1-\alpha) x_u \sum_{v_1\in S_1}\big( d_{\overline{T_1}}(u,v_1)- d_{\overline{T}}(u,v_1)\big)x_{v_1}+2(1-\alpha)x_u\sum_{v_2\in S_2}\big( d_{\overline{T_1}}(u,v_2)- d_{\overline{T}}(u,v_2)\big)x_{v_2}\\
		&~ +2(1-\alpha)x_v\sum_{v_2\in S_2}\big( d_{\overline{T_1}}(v,v_2)- d_{\overline{T}}(v,v_2)\big)x_{v_2}+\alpha x_u^2 \sum_{v_1\in S_1}\big( d_{\overline{T_1}}(u,v_1)- d_{\overline{T}}(u,v_1)\big)\\
		&~ +\alpha x_u^2\sum_{v_2\in S_2}\big( d_{\overline{T_1}}(u,v_2)- d_{\overline{T}}(u,v_2)\big)+\alpha x_v^2 \sum_{v_2\in S_2}\big( d_{\overline{T_1}}(v,v_2)- d_{\overline{T}}(v,v_2)\big)\\
		\geq &~ 2(1-\alpha)x_u \sum_{v_1\in S_1}\big( d_{\overline{T_1}}(u,v_1)- d_{\overline{T}}(u,v_1)\big)x_{v_1}+\alpha x_u^2 \sum_{v_1\in S_1}\big( d_{\overline{T_1}}(u,v_1)- d_{\overline{T}}(u,v_1)\big)\\
		&~ +2(1-\alpha)(x_u-x_v)\sum_{v_2\in S_2}x_{v_2}+\alpha(x_u^2-x_v^2)|S_2|\\
		\geq &~ (x_u-x_v)\Big(2(1-\alpha)\sum_{v_2\in S_2}x_{v_2}+\alpha(x_u+x_v)|S_2|\Big)\\
		\geq &~ 0.
	\end{align*}
	
	If $\rho_\alpha(\overline{T_1})= \rho_\alpha(\overline{T})$, then for the vertex $u$, we have $0=\rho_\alpha(\overline{T_1})x_u- \rho_\alpha(\overline{T})x_u\geq (1-\alpha)\sum_{v_2\in S_2}x_{v_2}+\alpha x_u |S_2|>0$, a contradiction. Therefore, $\rho_\alpha(\overline{T_1})> \rho_\alpha(\overline{T})$. This completes the proof.
\end{proof}

Now, using the above theorems, we determine the unique tree whose
complement has minimum distance spectral radius, distance signless Laplacian spectral radius, and $D_\alpha$-spectral
radius, respectively.

\begin{thm}
	Let $T$ be a tree of order $n\geq 4$ and $T\ncong K_{1,n-1},P_n$. Then 
	\begin{enumerate}
		\item $\lambda_1(\overline{T})>\lambda_1(\overline{P_n})$.
		\item $\mu_1(\overline{T})>\mu_1(\overline{P_n})$.
		\item $\rho_{\alpha}(\overline{T})>\rho_{\alpha}(\overline{P_n})$.
	\end{enumerate}
\end{thm}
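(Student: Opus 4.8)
The plan is to reduce all three inequalities simultaneously to the monotonicity Theorems \ref{dist-comple}, \ref{distsign-comple} and \ref{dalpha-comple} by walking from $P_n$ up to $T$ inside Csikv\'{a}ri's poset; since those theorems behave identically for $\lambda_1$, $\mu_1$ and $\rho_\alpha$, I would run the argument once. To build the chain: since $T\ncong P_n$, Theorem \ref{GTS-minimal} provides a tree $T'$ such that $T$ is the image of a proper GTS of $T'$, hence $T'<T$; if $T'\ncong P_n$ we repeat. A proper GTS raises the number of pendant vertices by exactly one, so each step downward strictly decreases it, and since this count is at least two and equals two only for the path, the process must terminate at $P_n$. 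This produces a chain $P_n=T^{(0)}<T^{(1)}<\cdots<T^{(k)}=T$ with $k\ge 1$ in which each $T^{(i+1)}$ is obtained from $T^{(i)}$ by a proper GTS.

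Next — this is the step that repairs the diameter hypothesis of the three theorems — I would show that $\diam(T^{(i)})\ge 4$ for every $i$ with $0\le i\le k-1$. Suppose not; since $n\ge 4$, then $\diam(T^{(i)})\in\{2,3\}$ for some such $i$. If it is $2$, then $T^{(i)}\cong K_{1,n-1}$, the maximal element of the poset, contradicting $T^{(i)}<T^{(i+1)}$. If it is $3$, then $T^{(i)}$ is a double star (in a tree the endpoints of a diametral path $v_0v_1v_2v_3$ force every remaining vertex to be adjacent to $v_1$ or to $v_2$); a proper GTS must act on two non-pendant vertices, and the only non-pendant vertices of a double star are its two centres, so the GTS is forced to move all pendants of one centre onto the other, giving $T^{(i+1)}\cong K_{1,n-1}$. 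If $i+1=k$ this says $T\cong K_{1,n-1}$, which is excluded; if $i+1<k$ it contradicts the maximality of $K_{1,n-1}$ via $T^{(i+1)}<T^{(i+2)}$. Either way we reach a contradiction, proving the claim.

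Finally I would chain the inequalities: for each $i$ with $0\le i\le k-1$ we have $\diam(T^{(i)})\ge 4$ and $T^{(i+1)}$ arises from $T^{(i)}$ by a proper GTS, so Theorems \ref{dist-comple}, \ref{distsign-comple} and \ref{dalpha-comple} give $\lambda_1(\overline{T^{(i)}})<\lambda_1(\overline{T^{(i+1)}})$, $\mu_1(\overline{T^{(i)}})<\mu_1(\overline{T^{(i+1)}})$ and $\rho_\alpha(\overline{T^{(i)}})<\rho_\alpha(\overline{T^{(i+1)}})$; telescoping over $i=0,\dots,k-1$ (with $k\ge 1$) yields $\lambda_1(\overline{T})>\lambda_1(\overline{P_n})$, $\mu_1(\overline{T})>\mu_1(\overline{P_n})$ and $\rho_\alpha(\overline{T})>\rho_\alpha(\overline{P_n})$.

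The main obstacle is the diameter-control step: a priori the chain could visit a star or a double star, for which the monotonicity theorems say nothing, and the fix hinges on the structural observation that a proper GTS of a double star is always $K_{1,n-1}$, together with the maximality of $K_{1,n-1}$; these force any low-diameter member of the chain to be its final term, and that final term has been excluded by hypothesis. If one preferred to avoid reasoning about arbitrary chains, the remaining case $\diam(T)=3$ could instead be dispatched directly: for $n\ge 5$ every double star is the image of a proper GTS of an explicit diameter-$4$ tree (obtained from it by detaching one leaf at a centre of larger degree and reattaching it to another leaf at that centre), to which the chain analysis and the monotonicity theorems apply; the case $n=4$ is vacuous since the only trees of order $4$ are $P_4$ and $K_{1,3}$.
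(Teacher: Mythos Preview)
Your proof is correct and follows the same strategy as the paper's: descend from $T$ to $P_n$ in Csikv\'{a}ri's poset and apply Theorems \ref{dist-comple}--\ref{dalpha-comple} at each step of the resulting chain. In fact you are more careful than the paper, whose two-line argument invokes those theorems without verifying that every intermediate tree has diameter at least $4$; your double-star analysis (showing that the only proper GTS of a diameter-$3$ tree yields $K_{1,n-1}$, which would have to be the top of the chain) supplies exactly this missing verification.
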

\begin{proof}
	It follows from Theorem \ref{GTS-minimal} that any tree $T$ of order $n$ can be obtained from $P_n$ by repeatedly applying the graft transformation mentioned in Definition \ref{GTS}. Now, the proof of $1$, $2$ and $3$ follows from Theorem \ref{dist-comple}, Theorem \ref{distsign-comple} and Theorem \ref{dalpha-comple}, respectively. 
\end{proof}

\bibliographystyle{ieeetr}
\bibliography{ref}

\begin{thebibliography}{1}

\bibitem{Lin-Dist-comple}
H.~Lin and S.~Drury, ``The distance spectrum of complements of trees,'' {\em
  Linear Algebra Appl.}, vol.~530, pp.~185--201, 2017.

\bibitem{csikvari2010poset}
P.~Csikv\'{a}ri, ``On a poset of trees,'' {\em Combinatorica}, vol.~30, no.~2,
  pp.~125--137, 2010.

\bibitem{Li-Distsign-comple}
Y.~Li, R.~Qin, and D.~Li, ``On distance signless {L}aplacian spectrum of the
  complements of unicyclic graphs and trees,'' {\em Linear Algebra Appl.},
  vol.~631, pp.~235--253, 2021.

\bibitem{qin2021dalpha}
R.~Qin, D.~Li, and J.~Meng, ``On the {$D_\alpha$}-spectral radius of two types
  of graphs,'' {\em Appl. Math. Comput.}, vol.~396, pp.~Paper No. 125898, 11,
  2021.

\end{thebibliography}
\end{document}